\newtheorem{theorem}{Theorem}[section]
\newtheorem{corollary}[theorem]{Corollary}
\theoremstyle{definition}
\newtheorem{definition}[theorem]{Definition}
\newtheorem{example}[theorem]{Example}
\newtheorem{question}[theorem]{Question}
\newtheorem{remark}[theorem]{Remark}
\newcommand{\U}{\mathcal{U}}
\begin{document}
	\title{Some properties of selectively star-ccc spaces}
	\author{Yuan Sun$^{*}$}
	\address{College of Scienc, Beijing University of Civil Engineering and Architecture, Beijing 102616, China}
	\email{sunyuan@bucea.edu.cn}
	
\begin{abstract}
		In 2013 \cite{aur}, Aurichi introduced a topological property named selectively ccc that can be viewed as a selective version of the countable chain condition (CCC). Later, Bal and Kočinac in \cite{bal} extended Aurichi’s work and defined the star version of the selectively ccc property called selectively $k$-star-ccc. The aim of this paper is twofold. Firstly, we establish connections between the selectively $k$-star-ccc properties, the chain conditions and other star-Lindelöf properties. Secondly, some examples are presented to solve questions raised by Xuan and Song in \cite{xuan-2}.
	\end{abstract}
	
	\subjclass[2000]{54D20, 54E35}
	
	\keywords{chain conditions, selectively star-ccc, star-Lindelöf}
	
	\maketitle

	\section{Introduction}
	
Throughout this paper, let $\mathbb{N}^+$ be the set of positive integers. Let $\omega$ and $\omega_1$ denote respectively the first infinite ordinal and the first uncountable ordinal. 

Let us first recall some notations and definitions of the star covering properties that can be found in \cite{van}. Let $\U$ be an open cover of a topological space $X$ and let $A$ be a subset of $X$. We denote $\mathrm{ST}^{1}\left(A, \mathcal{U}\right)=\{U \in \mathcal{U}: U\cap A \neq \emptyset\}$ and let $\mathrm{st}^{1}(A, \mathcal{U})=\bigcup \mathrm{ST}^{1}(A, \mathcal{U})$. Inductively, we denote $\mathrm{ST}^{n+1}(A,\mathcal{U})=   \left\{U\in \mathcal{U}: U \cap \mathrm{st}^{n}(A, \mathcal{U})\neq \emptyset\right\}$ and let $\mathrm{st}^{n+1}(A,\mathcal{U})=\bigcup \mathrm{ST}^{n+1}(A,\mathcal{U})$. For brevity we will write $\mathrm{ST}(A,\mathcal{U})$ and $\operatorname{st}(A,\mathcal{U})$ respectively for $n=1$, and write $\mathrm{st}\left(x,\mathcal{U}\right)$ for $\mathrm{st}\left(\{x\},\mathcal{U}\right)$. 

\begin{definition}\label{df1.1}
A space $X$ has the \emph{countable chain condition} (CCC)
if any pairwise disjoint open family in it is countable. A space $X$ is said to have the \emph{discrete countable chain condition} (DCCC) provided there does not exist an uncountable discrete open family.
	\end{definition}

\begin{definition}\label{df1.2}
For $k\in\mathbb{N}^+$. A space $X$ is said to be \emph{$k$-star-Lindelöf} if for every open cover $\mathcal{U}$ of $X$ there is some countable subfamily $\mathcal{U}'\subset\mathcal{U}$ such that $\operatorname{st}^k\left(\bigcup\mathcal{U}',\mathcal{U}\right)=X$. For brevity we will write $k$-SL for $k$-star-Lindelöf and SL for $k=1$.
	\end{definition}

\begin{definition}\label{df1.3}
 For $k\in\mathbb{N}^+$. A space $X$ is said to be \emph{strongly $k$-star-Lindelöf} (denoted by $k$-SSL) if for every open cover $\mathcal{U}$ of $X$, there is some countable subset $A\subset X$ such that $\operatorname{st}^k(A,\mathcal{U})=X$.
	\end{definition}

By definitions, it is easily seen that every Lindelöf space is SSL. More generally, every $k$-SSL space is $k$-SL and every $k$-SL space is $k+1$-SSL. Note that the SL property (i.e., star-Lindelöf) defined in \cite{xuan-2} is stronger than here: in that paper, a space $X$ is called star-Lindelöf if for any open cover $\mathcal{U}$ of $X$, there is a Lindelöf subset $A\subset X$ such that $\mathrm{st}(A,\mathcal{U}) =
X$. 

\begin{definition}\label{df1.4}
A space is said to be \emph{weakly Lindelöf} if for every open cover $\mathcal{U}$ of $X$, there is some countable subfamily $\mathcal{U}'\subset\mathcal{U}$ whose union is dense in $X$, i.e., $\overline{\bigcup\mathcal{U}'}=X$.
	\end{definition}

Clearly, every Lindelöf space is weakly Lindelöf and every weakly Lindelöf space is SL. 

\begin{definition}\label{df1.5}
For $k\in\mathbb{N}^+$. A space $X$ is said to be \emph{weakly $k$-star-Lindelöf} (denoted by weakly $k$-SL) if for every open cover $\mathcal{U}$ of $X$, there is some countable subfamily $\mathcal{U}'\subset\mathcal{U}$ such that $\overline{\mathrm{st}^k\left(\bigcup\mathcal{U}',\mathcal{U}\right)}=X$. Similarly, a space $X$ is called \emph{weakly strongly $k$-star-Lindelöf} (weakly $k$-SSL) if for every open cover $\mathcal{U}$ of $X$, there is some countable subset  $A\subset X$ such that $\overline{\operatorname{st}^k(A,\mathcal{U})}=X$.
	\end{definition}

Obviously, every weakly $k$-SL space is $k+1$-SL and every weakly $k$-SSL space is $k+1$-SSL for $k\in\mathbb{N}^+$. Note also that the weakly SSL property (i.e., weakly strongly star-Lindelöf) defined here was named \emph{weakly star countable} in \cite{ala, lin, xuan-2}. 

\begin{definition}\label{df1.6}
A space $X$ is said to be \emph{$\omega$-star-Lindelöf} (denoted by $\omega$-SL) if for every open cover $\mathcal{U}$ of $X$, there is some $k\in\mathbb{N}^+$ and some countable subfamily $\mathcal{U}'\subset \mathcal{U}$ such that $\operatorname{st}^k\left(\bigcup\mathcal{U}',\mathcal{U}\right)=X$. 
	\end{definition}

By definition, it is easy to see that the $\omega$-SL property is a generalization of $k$-SL and of $k$-SSL for each $k\in\mathbb{N}^+$. Moreover, by using the $\omega$-SL property, van Douwen et al., established connections between the various star properties and the chain conditions. The following results can be found in \cite{van}.

\begin{theorem}\label{th1.7}
\begin{enumerate}
\item Every CCC space is weakly Lindelöf and every DCCC space is weakly SL.
\item In regular spaces, the $\omega$-SL property implies the DCCC; thus the DCCC, equals weakly SL, equals $2$-SL, equals $\omega$-SL, and all the properties in between.
\item In normal spaces, the DCCC implies the weakly SSL property; hence the DCCC equals weakly SSL, equals $2$-SSL, equals $\omega$-SL, and all the properties in between.
\item In perfectly normal spaces, the CCC, the weakly Lindelöf property, the SL property and all the properties in between are equivalent.
\end{enumerate}
\end{theorem}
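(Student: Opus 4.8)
The plan is to read Theorem~\ref{th1.7} as two kinds of assertions: the \emph{forward} implications, where a chain condition produces a (weak) star-covering property, and the \emph{reverse} implications (the real content of (2)--(4)), where a star-covering property is promoted back to a chain condition under a separation axiom. For the forward directions I would use maximal cellular or discrete families. To see that every CCC space is weakly Lindelöf, fix an open cover $\U$, let $\mathcal B$ be the collection of nonempty open sets each contained in some member of $\U$, and by Zorn's lemma pick a maximal pairwise disjoint $\mathcal D\subseteq\mathcal B$, which is countable by CCC. Choosing for each $D\in\mathcal D$ a set $U_D\in\U$ with $D\subseteq U_D$, the family $\U'=\{U_D\}$ has dense union, for otherwise a nonempty open set of the form $U\cap(X\setminus\overline{\bigcup\U'})$ with $U\in\U$ would lie in $\mathcal B$ and be disjoint from every $D\in\mathcal D$, contradicting maximality. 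The claim that every DCCC space is weakly SL is the exact star-analogue: take a maximal \emph{discrete} $\mathcal D\subseteq\U$ (countable by DCCC) and argue that a point outside $\overline{\operatorname{st}(\bigcup\mathcal D,\U)}$ sits in a member of $\U$ disjoint from $\bigcup\mathcal D$, so that $\overline{\operatorname{st}(\bigcup\mathcal D,\U)}=X$. The point to verify carefully here is that such a set can be adjoined without destroying discreteness. The remaining forward-type implications in (2)--(4) are then free, via the general chain recorded before the theorem (weakly $k$-SL $\Rightarrow (k+1)$-SL, weakly $k$-SSL $\Rightarrow (k+1)$-SSL, and $\omega$-SL generalizing every $k$-SL and $k$-SSL).

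The substance lies in the reverse implications: $\omega$-SL $\Rightarrow$ DCCC in regular spaces (closing (2)), and SL $\Rightarrow$ CCC in perfectly normal spaces (closing (4)). I would argue both contrapositively from a witnessing family. Given an uncountable discrete open family $\{U_\alpha:\alpha<\omega_1\}$, discreteness forces the closures $\overline{U_\alpha}$ to be pairwise disjoint and the family to be closure-preserving, so the sets $W_\beta=X\setminus\overline{\bigcup_{\alpha\ne\beta}U_\alpha}$ are open, each contains a chosen point $x_\beta\in U_\beta$, and they cover $X$ off the "gap" $G=X\setminus\bigcup_\alpha\overline{U_\alpha}$. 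Modulo $G$, one checks $x_\gamma\in W_\beta$ only for $\beta=\gamma$ and distinct $W_\beta,W_\gamma$ meet only inside $G$; hence if $G$ is neutralized then $\operatorname{st}^k(x_\gamma,\U)=W_\gamma$ for every $k$, and no countable subfamily can star-cover, defeating even $\omega$-SL. Regularity is exactly what neutralizes $G$: separate each $x_\alpha$ from the closed set $\bigcup_{\beta\ne\alpha}\overline{U_\beta}$ and cover $G$ by open sets each meeting at most one $U_\alpha$ and with closures missing $\{x_\alpha:\alpha<\omega_1\}$. For (4) the witnessing family is only cellular, so its closures need not be disjoint; here perfect normality supplies the zero-set/$F_\sigma$ structure needed to separate the chosen points and run the same \emph{one-step} star argument against SL.

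For the forward implication of (3), DCCC $\Rightarrow$ weakly SSL in normal spaces, part (1) already gives a countable $\U'\subseteq\U$ with $\overline{\operatorname{st}(\bigcup\U',\U)}=X$; the difficulty is that weakly SSL demands a countable \emph{set} $A$, and $\operatorname{st}(A,\U)\subseteq\operatorname{st}(\bigcup\U',\U)$ runs the wrong way. Normality enters precisely to bridge this: pass to a closed shrinking of $\U'$ and choose $A$ meeting each piece, so that one additional star recovers a dense star of a point set. This is why the conclusion lands at the weakly-SSL / $2$-SSL level rather than at full SSL, and why (3)'s equivalences then close through the regular case (2).

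The step I expect to be genuinely delicate is controlling the \emph{iterated} stars in (2)--(3): a single cover must defeat $\operatorname{st}^k$ \emph{for every finite $k$ at once}. The naive covers fail in both extreme ways — a large gap-filling set makes one star engulf $X$, while a fine cover of the gap lets stars bridge distinct $U_\alpha$ through overlapping chains and blow the trace on $\{x_\alpha\}$ up to uncountable size in two steps. The full weight of the separation axioms (disjoint closures from discreteness, together with regularity or normality to localize or shrink the gap cover) is spent exactly on breaking these bridges, so that $x_\gamma\in\operatorname{st}^k(\bigcup\U',\U)$ forces $\gamma$ into the countable index set of $\U'$. Making this bridge-breaking hold uniformly in $k$ is the crux; the cellular case (4), needing only $k=1$, avoids the iteration entirely and pays for it with the stronger hypothesis of perfect normality.
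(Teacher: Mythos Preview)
The paper does not prove Theorem~\ref{th1.7}; it is quoted verbatim as a result from \cite{van} (``The following results can be found in \cite{van}''), with no argument supplied. So there is no proof in the present paper to compare your proposal against.

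That said, as a sketch of the \cite{van} arguments your outline is broadly on target for (1), (2), and (4), and you correctly identify where the separation axioms are spent. Two places deserve a flag. First, in DCCC $\Rightarrow$ weakly SL you note yourself that adjoining the missed set need not preserve discreteness; this is a real issue, and the clean fix is to work not with a maximal discrete subfamily of $\U$ but with a maximal family $\mathcal D$ of open sets whose \emph{closures} are discrete and each contained in a member of $\U$ (regularity, or an equivalent device, is typically invoked here to make such refinements nonempty). Second, your route to (3) does not close: from a countable $\U'\subseteq\U$ with $\overline{\operatorname{st}(\bigcup\U',\U)}=X$, picking one point in each member of $\U'$ gives a countable $A$ with $\operatorname{st}(A,\U)\supseteq\bigcup\U'$, but that only yields $\overline{\operatorname{st}(A,\U)}\supseteq\overline{\bigcup\U'}$, not $\overline{\operatorname{st}(A,\U)}=X$. ``Pass to a closed shrinking'' does not repair this, because shrinking $\U'$ makes $\operatorname{st}(A,\U)$ no larger. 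The argument in \cite{van} for (3) instead exploits normality to separate an uncountable closed discrete set of witnesses from the complement of a suitable star, producing a cover that defeats weak SSL directly; it is closer in spirit to your contrapositive argument for (2) than to the forward reduction you propose.
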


In 2013 \cite{aur}, Aurichi presented a natural way of defining a selective version of the CCC.

\begin{definition}\label{df1.8}
A space $X$ is said to be \emph{selectively ccc} if for every sequence $(\mathcal{A}_n:n\in\omega)$ of maximal pairwise disjoint open families in $X$, there exists a sequence $(A_n\in\mathcal{A}_n:n\in\omega)$ whose union is dense in $X$, i.e., $\overline{\bigcup_{n\in\omega} A_n}=X$. 
	\end{definition}

Later, Bal and Kočinac in \cite{bal} extend Aurichi’s work and defined the star version of the selectively ccc property.

\begin{definition}\label{df1.9}
Let $k\in\mathbb{N^+}$. A space $X$ is said to be a \emph{selectively $k$-star-ccc} space if for every open cover $\mathcal{U}$ of $X$ and every sequence $(\mathcal{A}_n:n\in\omega)$ of maximal pairwise disjoint open families of $X$, there is a sequence $(A_n\in\mathcal{A}_n:n\in\omega)$ such that $\operatorname{st}^k(\bigcup_{n\in\omega} A_n,\mathcal{U})=X$. We write selectively star-ccc instead of selectively $1$-star-ccc.
	\end{definition}

In several papers \cite{song-1,song-2,xuan-1,xuan-2}, Song and Xuan investigated the relations between the selectively $k$-star-ccc properties and the chain conditions.

\begin{theorem}\label{th1.10}

\begin{enumerate}
\item Every CCC space is selectively $2$-star-ccc \cite[Corollary~3.2]{xuan-2}.

\item The selectively star-ccc property implies both the DCCC \cite[Theorem~3.6]{xuan-1} and the weakly SSL property \cite[Theorem~3.12]{xuan-2}. However, there exists a Tychonoff DCCC space that is not selectively star-ccc \cite[Example~3.7]{xuan-1}. 

\item There exists a Tychonoff selectively $2$-star-ccc space that is neither SSL nor selectively star-ccc \cite[Example~3.3]{song-1}.

\item There exist counterexamples showing that both the product of a selectively star-ccc space with a Lindelöf space \cite[Example~3.11]{song-2} or the product of two Lindelöf spaces \cite[Example~3.16]{xuan-2} need not be selectively star-ccc.
\end{enumerate}
\end{theorem}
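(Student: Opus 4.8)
The plan is to handle the positive implications by direct argument and to treat the non-implications by exhibiting the cited counterexamples, the latter being where I expect the real work to lie. Two elementary facts drive every implication. First, a \emph{maximal} pairwise disjoint open family $\mathcal{A}$ has dense union, since any nonempty open set disjoint from $\bigcup\mathcal{A}$ could be adjoined, contradicting maximality. Second, if $\operatorname{st}(B,\U)$ is dense for some $B\subseteq X$ and $\U$ covers $X$, then $\operatorname{st}^2(B,\U)=X$: any $x$ lies in some $U\in\U$, and $U$, being open, meets the dense set $\operatorname{st}(B,\U)$, so $U\in\operatorname{ST}^2(B,\U)$ and hence $x\in\operatorname{st}^2(B,\U)$.

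For item (1), let $X$ be CCC, let $\U$ be an open cover, and let $(\mathcal{A}_n)$ be maximal pairwise disjoint open families. By Theorem~\ref{th1.7}(1), $X$ is weakly Lindelöf, so fix nonempty $U_n\in\U$ with $\overline{\bigcup_n U_n}=X$. Since each $\bigcup\mathcal{A}_n$ is dense, choose $A_n\in\mathcal{A}_n$ with $A_n\cap U_n\neq\emptyset$; then $U_n\in\operatorname{ST}(A_n,\U)$, whence $\bigcup_n U_n\subseteq\operatorname{st}(\bigcup_n A_n,\U)$. Thus $\operatorname{st}(\bigcup_n A_n,\U)$ is dense, and the second fact yields $\operatorname{st}^2(\bigcup_n A_n,\U)=X$, i.e.\ $X$ is selectively $2$-star-ccc.

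For the implication in item (2) that selectively star-ccc implies DCCC, suppose instead that $\{V_\alpha:\alpha<\omega_1\}$ is a discrete (hence pairwise disjoint) open family; pick $x_\alpha\in V_\alpha$, note that $\{x_\alpha:\alpha<\omega_1\}$ is closed discrete, so $\U=\{V_\alpha:\alpha<\omega_1\}\cup\{X\setminus\{x_\alpha:\alpha<\omega_1\}\}$ is an open cover with $\operatorname{st}(x_\alpha,\U)=V_\alpha$. Extend $\{V_\alpha\}$ to a maximal pairwise disjoint open family $\mathcal{A}$ and apply selectively star-ccc to the constant sequence $\mathcal{A}_n=\mathcal{A}$: the resulting countable selection omits some $V_\beta$, which is then disjoint from every selected cell, so the only member of $\U$ containing $x_\beta$, namely $V_\beta$, cannot meet the selection, contradicting $\operatorname{st}(\bigcup_n A_n,\U)=X$. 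The weakly SSL conclusion follows the same template, now taking $\mathcal{A}$ to refine $\U$ and selecting one point inside each chosen cell; the step I would verify most carefully is the passage from the open cells to a countable point set, since refinement only yields the $2$-SSL bound $\operatorname{st}^2(A,\U)=X$ for free, and upgrading this to density of $\operatorname{st}(A,\U)$ is the crux (one clean route, available under normality, is to combine the DCCC implication just proved with Theorem~\ref{th1.7}(3)).

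The non-implications are where the effort concentrates, and here the plan is to build the cited examples rather than argue abstractly. The separation of selectively $2$-star-ccc from both SSL and selectively star-ccc (item (3)), the DCCC space that fails to be selectively star-ccc (item (2)), and the product failures (item (4)) are all obtained from Isbell--Mr\'{o}wka $\Psi$-type spaces over suitable almost disjoint families, Alexandroff-duplicate constructions, and products of the form $X\times(\omega_1+1)$, in which an uncountable closed discrete ``spine'' lets one defeat any countable star-selection while the surviving chain condition is kept intact by a separable ``continuous'' part. The main obstacle is the simultaneous bookkeeping: one must arrange an open cover together with a sequence of maximal cellular families so that \emph{every} countable selection leaves an uncountable discrete piece uncovered at the intended star-level, while still verifying Tychonoffness and the weak covering or chain property that is supposed to persist. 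I would organize each example by first isolating the uncountable discrete set that carries the obstruction, then checking the retained property on the dense remainder.
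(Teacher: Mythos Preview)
The paper does not prove Theorem~\ref{th1.10} at all: it is a summary of results from \cite{song-1,song-2,xuan-1,xuan-2}, stated with citations and no argument. So there is no ``paper's own proof'' to compare your proposal against; you are supplying proofs the paper never intended to give. That said, your sketch for item~(1) is correct and coincides with what the paper itself later records as Corollary~\ref{co2.3} (CCC $\Rightarrow$ weakly Lindel\"of $\Rightarrow$ selectively $2$-star-ccc), and your DCCC argument in item~(2) is fine.

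There is, however, a genuine gap in your treatment of ``selectively star-ccc $\Rightarrow$ weakly SSL'' in item~(2), and you have already put your finger on it. Your refinement-and-pick-a-point strategy only yields $\operatorname{st}(A,\U)\supseteq\bigcup_n A_n$ together with $\operatorname{st}(\bigcup_n A_n,\U)=X$, which gives $\operatorname{st}^2(A,\U)=X$ (i.e.\ $2$-SSL) but not density of $\operatorname{st}(A,\U)$. Your proposed fix via normality and Theorem~\ref{th1.7}(3) would prove a weaker statement than what is being cited: \cite[Theorem~3.12]{xuan-2} asserts the implication with no separation hypothesis. So as it stands your sketch does not establish item~(2) as claimed, and the route through DCCC plus normality is not a substitute. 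You would need the actual argument from \cite{xuan-2}, which does not proceed by naive refinement.

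For the counterexamples in items~(2)--(4) your plan is reasonable in spirit (the relevant spaces are indeed Mr\'owka-type and ordinal-product constructions), but it is only a plan; the paper likewise gives no details here, deferring entirely to the citations.
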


In this paper, we will continue to investigate the properties of selectively $k$-star-ccc spaces. We mainly prove that:

\begin{enumerate} 
\item In regular spaces, the DCCC equals selectively $3$-star-ccc and all $k$-star-ccc in between (see Corollary~\ref{co2.6} below). However, there exists a Hausdorff space that is selectively $3$-star-ccc but does not have the DCCC (see Example~\ref{ex3.4} below). Therefore, we could partially answer Question~4.1 in \cite{xuan-2}.

\item In normal spaces, the selectively $3$-star-ccc property and the weakly SSL property are equivalent (see Corollary~\ref{co2.6} below). Thus, we could answer Question ~4.10 in \cite{xuan-2}.

\item There exists a first-countable space that is selectively $3$-star-ccc but is neither weakly SSL nor selectively $2$-star-ccc (see Example~\ref{ex3.4} below); that could give Question 4.9 and 4.11 in \cite{xuan-2} a negative answer.

\item The product of a selectively star-ccc space with a compact space need not be selectively star-ccc (see Example~\ref{ex3.5} below). 
\end{enumerate}

\section{General results}

\begin{theorem}\label{th2.1}
If $X$ is a selectively star-ccc space, then $X$ is SL. In general, every selectively $k$-star-ccc space is $k$-SL for each $k\in \mathbb{N}^+$.
\end{theorem}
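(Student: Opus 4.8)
The plan is to reduce the abstract selection condition to a concrete countable subfamily of a given cover by feeding the selectively $k$-star-ccc property a cleverly chosen \emph{constant} sequence of maximal pairwise disjoint open families, one that is built to refine the cover $\mathcal{U}$. Concretely, fix an arbitrary open cover $\mathcal{U}$ of $X$. Let $\mathcal{P}$ be the collection of all pairwise disjoint families of nonempty open sets each of whose members is contained in some member of $\mathcal{U}$, ordered by inclusion. Since $\mathcal{P}$ is closed under unions of chains, Zorn's lemma yields a maximal element $\mathcal{A}\in\mathcal{P}$; by construction $\mathcal{A}$ is a pairwise disjoint open family refining $\mathcal{U}$, so for each $A\in\mathcal{A}$ we may fix some $U_A\in\mathcal{U}$ with $A\subseteq U_A$.

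The first key step is to verify that this $\mathcal{A}$ is in fact a \emph{maximal} pairwise disjoint open family in $X$ (in the global sense required by Definition~\ref{df1.9}), not merely maximal within the refining class $\mathcal{P}$. Suppose $W$ is a nonempty open set disjoint from every member of $\mathcal{A}$. Because $\mathcal{U}$ covers $X$, there is some $U\in\mathcal{U}$ meeting $W$, and then $W\cap U$ is a nonempty open set contained in $U$ and disjoint from all of $\mathcal{A}$; adjoining it to $\mathcal{A}$ produces a strictly larger member of $\mathcal{P}$, contradicting maximality. Hence no such $W$ exists, so $\mathcal{A}$ is a maximal pairwise disjoint open family. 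I regard this verification as the main (though mild) obstacle, since it is precisely where the covering hypothesis on $\mathcal{U}$ is used, and it is easy to conflate the two notions of maximality.

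With $\mathcal{A}$ in hand, set $\mathcal{A}_n=\mathcal{A}$ for every $n\in\omega$. Applying the selectively $k$-star-ccc property to the cover $\mathcal{U}$ and this constant sequence produces a selection $(A_n\in\mathcal{A}:n\in\omega)$ with $\operatorname{st}^k\!\left(\bigcup_{n\in\omega}A_n,\mathcal{U}\right)=X$. Now put $\mathcal{U}'=\{U_{A_n}:n\in\omega\}$, a countable subfamily of $\mathcal{U}$. Since $A_n\subseteq U_{A_n}$ for each $n$, we have $\bigcup_{n\in\omega}A_n\subseteq\bigcup\mathcal{U}'$, and the star operator $\operatorname{st}^k(\,\cdot\,,\mathcal{U})$ is monotone in its first argument (immediate from the definition of $\operatorname{st}^1$ and induction on $k$). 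Therefore
\[
X=\operatorname{st}^k\!\left(\bigcup_{n\in\omega}A_n,\mathcal{U}\right)\subseteq \operatorname{st}^k\!\left(\bigcup\mathcal{U}',\mathcal{U}\right)\subseteq X,
\]
so $\operatorname{st}^k\!\left(\bigcup\mathcal{U}',\mathcal{U}\right)=X$. As $\mathcal{U}$ was an arbitrary open cover, this exhibits $X$ as $k$-SL, and specializing to $k=1$ gives the first assertion. The only ingredients beyond elementary set theory are Zorn's lemma and monotonicity of the iterated star, so the argument is uniform in $k$.
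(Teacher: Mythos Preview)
Your proof is correct and follows essentially the same approach as the paper: both take a maximal pairwise disjoint open family $\mathcal{A}$ refining $\mathcal{U}$, feed the constant sequence $\mathcal{A}_n=\mathcal{A}$ into the selectively $k$-star-ccc hypothesis, and then use the refinement to pass to a countable subfamily of $\mathcal{U}$. The only cosmetic difference is that the paper argues by contrapositive and cites \cite{aur} for the existence and maximality of $\mathcal{A}$, whereas you argue directly and spell out the Zorn's-lemma construction together with the verification that maximality in $\mathcal{P}$ implies maximality among all pairwise disjoint open families.
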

\begin{proof}
Suppose $X$ is not $k$-SL and let $\mathcal{U}$ be an open cover of $X$ such that if $\{U_n:n\in\omega\}\subset \mathcal{U}$ is countable, then $\operatorname{st}^k(\bigcup_{n\in\omega}U_n,\mathcal{U})\neq X$. Let $\mathcal{A}$ be a maximal pairwise disjoint open family which refines $\mathcal{U}$ (in fact, every open dense family admits a refinement that is a maximal pairwise disjoint open family, see \cite{aur}). Define $\mathcal{A}_n=\mathcal{A}$ for every $n\in\omega$, hence $\left(\mathcal{A}_n: n\in \omega\right)$ is a maximal pairwise disjoint open family. Let $\left(A_n: n\in \omega\right)$ be any sequence with $A_n\in\mathcal{A}_n$. As $\mathcal{A}_n$ refine $\mathcal{U}$, there exists a countable subfamily $\{U_n:n\in\omega\}\subset \mathcal{U}$ such that $\bigcup_{n\in\omega}A_n\subset \bigcup_{n\in\omega}U_n$. However, by construction, $\operatorname{st}^k(\bigcup_{n\in\omega}U_n,\mathcal{U})$ cannot cover $X$, hence $\operatorname{st}^k(\bigcup_{n\in\omega} A_n,\mathcal{U})\neq X$, i.e., $X$ is not selectively $k$-star-ccc. 
	\end{proof}

\begin{theorem}\label{th2.2}
If $X$ is a SL space, then $X$ is selectively $2$-star-ccc. In general, every $k$-SL space is selectively $k+1$-star-ccc for each $k\in\mathbb{N}^+$.
\end{theorem}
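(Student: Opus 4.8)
The plan is to prove the general statement, that every $k$-SL space is selectively $(k+1)$-star-ccc; taking $k=1$ then yields the first assertion. The guiding idea is that a single application of the $k$-SL property produces a countable subfamily whose $k$-th star already covers $X$, and that the maximality of each disjoint family lets us ``capture'' this subfamily inside one additional star of a suitably chosen selection, thereby gaining exactly one star level.

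First I would fix an arbitrary open cover $\mathcal{U}$ of $X$ and a sequence $(\mathcal{A}_n : n \in \omega)$ of maximal pairwise disjoint open families. Applying the $k$-SL property to $\mathcal{U}$ gives a countable subfamily $\mathcal{U}' = \{U_n : n \in \omega\} \subseteq \mathcal{U}$ with $\operatorname{st}^k(\bigcup \mathcal{U}', \mathcal{U}) = X$ (padding the enumeration if $\mathcal{U}'$ is finite). Next I would record the key structural fact: by maximality, the union $\bigcup \mathcal{A}_n$ is dense in $X$ for every $n$, since any nonempty open set missing every member of $\mathcal{A}_n$ could be adjoined to the family, contradicting maximality. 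Consequently, for each $n$ with $U_n \neq \emptyset$ there is some $A_n \in \mathcal{A}_n$ meeting $U_n$; for the remaining indices I would choose $A_n \in \mathcal{A}_n$ arbitrarily. This defines a legitimate selection $(A_n \in \mathcal{A}_n : n \in \omega)$.

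The final step is to verify that this selection works. Whenever $A_n \cap U_n \neq \emptyset$, the set $U_n$ is a member of $\mathcal{U}$ meeting $A_n$, so $U_n \subseteq \operatorname{st}(A_n, \mathcal{U}) \subseteq \operatorname{st}(\bigcup_{m} A_m, \mathcal{U})$; taking the union over $n$ yields $\bigcup \mathcal{U}' \subseteq \operatorname{st}(\bigcup_{n} A_n, \mathcal{U})$. Writing $B = \bigcup_{n} A_n$ and applying $\operatorname{st}^k(\,\cdot\,, \mathcal{U})$ together with its monotonicity, I obtain $X = \operatorname{st}^k(\bigcup \mathcal{U}', \mathcal{U}) \subseteq \operatorname{st}^k(\operatorname{st}(B, \mathcal{U}), \mathcal{U}) = \operatorname{st}^{k+1}(B, \mathcal{U})$, where the last equality is the routine composition identity $\operatorname{st}^a(\operatorname{st}^b(\,\cdot\,, \mathcal{U}), \mathcal{U}) = \operatorname{st}^{a+b}(\,\cdot\,, \mathcal{U})$ proved by an easy induction from the definition. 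Thus $\operatorname{st}^{k+1}(\bigcup_n A_n, \mathcal{U}) = X$, as required.

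I expect the only genuine content to be the density observation and the one-star ``capture'' argument; the main subtlety is the bookkeeping of matching the enumeration of $\mathcal{U}'$ with the families $\mathcal{A}_n$ (handling empty or repeated members) and confirming the star-composition identity, both of which are routine once stated. Everything else is monotonicity of the star operator.
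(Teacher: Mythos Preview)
Your proof is correct and follows essentially the same approach as the paper: apply the $k$-SL property to obtain a countable $\{U_n\}\subseteq\mathcal{U}$ whose $k$-th star covers $X$, use density of each $\bigcup\mathcal{A}_n$ to select $A_n\in\mathcal{A}_n$ meeting $U_n$, and conclude that one additional star level suffices. The only cosmetic difference is that the paper writes out the $k=1$ case by chasing a point $x$ through two members of $\mathcal{U}$, whereas you handle general $k$ via the inclusion $\bigcup\mathcal{U}'\subseteq\operatorname{st}(\bigcup_n A_n,\mathcal{U})$ and the composition identity for iterated stars.
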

\begin{proof}
We give the proof for $k= 1$. It is clear how the proof for other values of $k$ can be obtained. Suppose $X$ is SL and let $\mathcal{U}$ be any open cover of $X$, then there exists an countable subfamily $\{U_n:n\in\omega\}\subset \mathcal{U}$ such that $\operatorname{st}(\bigcup_{n\in\omega} U_n,\mathcal{U})= X$. Let $\left(\mathcal{A}_n: n\in \omega\right)$ be any maximal pairwise disjoint open family. For each $n\in\omega$, because $\bigcup\mathcal{A}_n$ is dense in $X$, then there exists $A_n\in\mathcal{A}_n$ such that $A_n\cap U_n\neq\emptyset$. Let $x$ be any point of $X$. Because $\operatorname{st}(\bigcup_{n\in\omega} U_n,\mathcal{U})= X$, then there exist $n(x)\in\omega$ and $U_x\in\mathcal{U}$ such that $x\in U_x$ and $U_x\cap U_{n(x)}\neq\emptyset$. Moreover, as $U_{n(x)}\cap A_{n(x)}\neq\emptyset$, it follows that $x\in\operatorname{st}^2(\bigcup_{n\in\omega} A_n,\mathcal{U})$ and hence $\operatorname{st}^2(\bigcup_{n\in\omega} A_n,\mathcal{U})= X$.
\end{proof}

Note that although Song and Xuan have proven that both weakly Lindelöf spaces and star-Lindelöf spaces are selectively $2$-star-ccc (see Theorem~3.1 and~3.5 in \cite{xuan-2}). However, the property of star-Lindelöf defined in that paper is stronger than here. So the above results are by no means trivial. 

\begin{corollary}\label{co2.3}
Every CCC space is selectively $2$-star-ccc.
\end{corollary}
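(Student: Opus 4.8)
The plan is to obtain this as an immediate consequence of the implications already assembled in this section together with Theorem~\ref{th1.7}. The key observation is that ``CCC'', ``selectively $2$-star-ccc'', and the intermediate notion ``SL'' have all been linked in one direction or another, so the corollary should follow by chaining three one-step implications rather than by any fresh argument about open covers and maximal pairwise disjoint families.

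Concretely, I would proceed as follows. First, invoke Theorem~\ref{th1.7}(1), which states that every CCC space is weakly Lindelöf; this converts the chain condition hypothesis into a covering property. Second, use the remark recorded immediately after Definition~\ref{df1.4} — namely that every weakly Lindelöf space is SL — to pass from weakly Lindelöf to the SL (i.e.\ $1$-star-Lindelöf) property. This step is purely definitional: given an open cover $\mathcal{U}$, a countable $\mathcal{U}'\subset\mathcal{U}$ with $\overline{\bigcup\mathcal{U}'}=X$ forces $\operatorname{st}(\bigcup\mathcal{U}',\mathcal{U})=X$, since every point lies in some member of $\mathcal{U}$ that must meet the dense set $\bigcup\mathcal{U}'$. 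Third, apply Theorem~\ref{th2.2} in the case $k=1$, which asserts that every SL space is selectively $2$-star-ccc.

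Putting these together, a CCC space is weakly Lindelöf, hence SL, hence selectively $2$-star-ccc, which is exactly the assertion of the corollary. I do not expect a genuine obstacle here: the content of the result lives entirely in the earlier theorems, and the only thing to be careful about is citing the correct specializations — Theorem~\ref{th1.7}(1) for the first link and Theorem~\ref{th2.2} with $k=1$ for the last. The proof can therefore be written in a single sentence that names these three facts in order.
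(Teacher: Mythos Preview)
Your proposal is correct and follows essentially the same route as the paper: CCC $\Rightarrow$ weakly Lindel\"of $\Rightarrow$ SL $\Rightarrow$ selectively $2$-star-ccc, invoking Theorem~\ref{th2.2} for the last step. The only cosmetic difference is that the paper rederives the first implication in one line rather than citing Theorem~\ref{th1.7}(1).
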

\begin{proof}
For a CCC space $X$, any open cover has a countable subcover whose union is dense in $X$. Therefore every CCC space is weakly Lindelöf. Moreover, it is easy to see that every weakly Lindelöf space is SL. Thus by Theorem \ref{th2.2} above, it follows that every CCC space is selectively $2$-star-ccc.
\end{proof}

The above Corollary could answer Problem 4.5 raised by Bal and Kočinac in \cite{bal}. Note that such a result was first proved in \cite[Corollary 3.2]{xuan-2} by the direct implication: the CCC $\Longrightarrow $ the weakly Lindelöf property $\Longrightarrow $ the selectively $2$-star-ccc property.

Inspired by the definition of $\omega$–SL (see Definition~\ref{df1.6} above), we now consider the extension to selectively $\omega$-star-ccc.

\begin{definition}\label{df2.4}
A space $X$ is said to be \emph{selectively $\omega$-star-ccc} if for every open cover $\mathcal{U}$ of $X$ and every sequence $\left(\mathcal{A}_n: n\in \omega\right)$ of maximal pairwise disjoint open families, there exist a sequence $\left(A_n\in\mathcal{A}_n: n\in \omega\right)$ and a positive integer $k$ such that $\operatorname{st}^k(\bigcup_{n\in\omega} A_n,\mathcal{U})=X$.
\end{definition}

\begin{theorem}\label{th2.5}
The selectively $\omega$-star-ccc property is equivalent to the $\omega$-SL property.
\end{theorem}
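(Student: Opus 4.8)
The plan is to prove the two implications separately, each mirroring the technique of an earlier result in this section; the only genuinely new ingredient is keeping track of how the star-exponent behaves, since in the $\omega$-SL hypothesis that exponent is allowed to depend on the cover.

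For the implication selectively $\omega$-star-ccc $\Rightarrow$ $\omega$-SL, I would argue by contraposition exactly as in Theorem~\ref{th2.1}. Assuming $X$ is not $\omega$-SL, fix an open cover $\mathcal{U}$ witnessing this, so that $\operatorname{st}^k(\bigcup_{n\in\omega}U_n,\mathcal{U})\neq X$ for every $k\in\mathbb{N}^+$ and every countable $\{U_n:n\in\omega\}\subset\mathcal{U}$. Choose a maximal pairwise disjoint open family $\mathcal{A}$ refining $\mathcal{U}$ and set $\mathcal{A}_n=\mathcal{A}$ for all $n$. For an arbitrary selection $(A_n\in\mathcal{A}_n:n\in\omega)$, each $A_n$ lies in some $U_n\in\mathcal{U}$, so $\bigcup_{n}A_n\subset\bigcup_n U_n$; monotonicity of the iterated star then forces $\operatorname{st}^k(\bigcup_n A_n,\mathcal{U})\subset\operatorname{st}^k(\bigcup_n U_n,\mathcal{U})\neq X$ for every $k$. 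Hence no selection and no exponent can cover $X$, so $X$ fails to be selectively $\omega$-star-ccc.

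For the converse, I would adapt the argument of Theorem~\ref{th2.2}. Given a cover $\mathcal{U}$ and families $(\mathcal{A}_n:n\in\omega)$, use $\omega$-SL to obtain an integer $k$ and a countable $\{U_n:n\in\omega\}\subset\mathcal{U}$ with $\operatorname{st}^k(\bigcup_n U_n,\mathcal{U})=X$. Since each $\bigcup\mathcal{A}_n$ is dense, pick $A_n\in\mathcal{A}_n$ with $A_n\cap U_n\neq\emptyset$. Then $U_n\subset\operatorname{st}(A_n,\mathcal{U})\subset\operatorname{st}(\bigcup_m A_m,\mathcal{U})$, whence $\bigcup_n U_n\subset\operatorname{st}(\bigcup_m A_m,\mathcal{U})$. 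Applying $\operatorname{st}^k(\cdot,\mathcal{U})$ and using the additive law $\operatorname{st}^k(\operatorname{st}(B,\mathcal{U}),\mathcal{U})=\operatorname{st}^{k+1}(B,\mathcal{U})$ gives $X=\operatorname{st}^k(\bigcup_n U_n,\mathcal{U})\subset\operatorname{st}^{k+1}(\bigcup_m A_m,\mathcal{U})$. Thus the selection $(A_m)$ together with the exponent $k+1$ witnesses the selectively $\omega$-star-ccc property.

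The two facts doing the real work are purely formal: monotonicity of $B\mapsto\operatorname{st}^k(B,\mathcal{U})$ and the additive law $\operatorname{st}^k\circ\operatorname{st}=\operatorname{st}^{k+1}$, both immediate from the inductive definition of the star operator. I do not anticipate any serious obstacle; the one point requiring care is that the exponent is not uniform—it is produced by the cover in the $\omega$-SL hypothesis and merely incremented by one in the reverse direction—but since Definition~\ref{df2.4} likewise allows $k$ to depend on the cover, the quantifiers line up and no difficulty arises.
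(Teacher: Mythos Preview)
Your proposal is correct and follows essentially the same approach as the paper: both directions proceed exactly as you describe, with the first by contraposition via a maximal cellular refinement of a witnessing cover, and the second by selecting $A_n$ meeting $U_n$ and bumping the exponent from $k$ to $k+1$. Your write-up is in fact slightly more explicit than the paper's, which simply asserts ``Clearly, $\operatorname{st}^{k+1}(\bigcup_{n\in\omega} A_n,\mathcal{U})= X$'' without spelling out the monotonicity and additive-law justification that you provide.
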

\begin{proof}
Suppose $X$ is not $\omega$-SL, let $\mathcal{U}$ be an open cover of $X$ such that $\operatorname{st}^k(\bigcup_{n\in\omega}U_n,\mathcal{U})\neq X$ for any countable subfamily $\{U_n:n\in\omega\}\subset \mathcal{U}$ and positive integer $k$. Let $\mathcal{A}$ be a maximal pairwise disjoint open family that refines $\mathcal{U}$. Define $\mathcal{A}_n=\mathcal{A}$ for every $n\in\omega$, so $\left(\mathcal{A}_n: n\in \omega\right)$ is a sequence of maximal pairwise disjoint open families. Let $\left(A_n: n\in \omega\right)$ be any sequence with $A_n\in\mathcal{A}_n$. As $\mathcal{A}_n$ being a refinement of $\mathcal{U}$, there exists a countable subfamily $\{U_n:n\in\omega\}\subset \mathcal{U}$ such that $\bigcup_{n\in\omega}A_n\subset \bigcup_{n\in\omega}U_n$. However, by construction, $\operatorname{st}^k(\bigcup_{n\in\omega}U_n,\mathcal{U})$ cannot cover $X$, hence $\operatorname{st}^k(\bigcup_{n\in\omega} A_n,\mathcal{U})\neq X$. Because $\left(A_n: n\in \omega\right)$ and $k$ are arbitrary, thus $X$ is not selectively $\omega$-star-ccc.

Now suppose $X$ is $\omega$-SL. Let $\mathcal{U}$ be any open cover of $X$, then there exist a countable subfamily $\{U_n:n\in\omega\}\subset \mathcal{U}$ and some $k\in\mathbb{N}^+$ such that $\operatorname{st}^k(\bigcup_{n\in\omega} U_n,\mathcal{U})= X$. Let $\left(\mathcal{A}_n: n\in \omega\right)$ be any sequence of maximal pairwise disjoint open families. Because $\bigcup\mathcal{A}_n$ is dense in $X$ for every $n\in\omega$, then there exists $A_n\in\mathcal{A}_n$ such that $A_n\cap U_n\neq\emptyset$. Clearly, $\operatorname{st}^{k+1}(\bigcup_{n\in\omega} A_n,\mathcal{U})= X$. Therefore $X$ is selectively $\omega$-star-ccc.
\end{proof}

By Theorem \ref{th2.5} and Theorem\ref{th1.7}, we can get the following results.

\begin{corollary}\label{co2.6}

\begin{enumerate}
\item In regular spaces, every selectively $k$-star-ccc space has the DCCC for each $k\in\mathbb{N^+}$. In fact, the DCCC equals weakly SL, equals $2$-SL, equals selectively $3$-star-ccc, and all the properties in between.

\item  In normal spaces, the DCCC equals weakly SSL, equals $2$-SSL, equals selectively $3$-star-ccc, and all the properties in between.
\end{enumerate}
\end{corollary}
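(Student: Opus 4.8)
The plan is to assemble the claimed equivalences entirely out of one-step implications that are already available, closing them into cycles; no new construction is needed, so the work reduces to a careful bookkeeping of star-indices. Throughout I will use the elementary implications recorded in the excerpt: every weakly $k$-SL space is $(k+1)$-SL and every weakly $k$-SSL space is $(k+1)$-SSL (stated after Definition~\ref{df1.5}), every $k$-SSL space is $k$-SL (after Definition~\ref{df1.3}), and every $k$-SL space is $\omega$-SL (after Definition~\ref{df1.6}).

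First I would settle the assertion that, in a regular space, every selectively $k$-star-ccc space has the DCCC. For each fixed $k$, a selectively $k$-star-ccc space is trivially selectively $\omega$-star-ccc, since the constant level $k$ is permitted in Definition~\ref{df2.4}; hence by Theorem~\ref{th2.5} it is $\omega$-SL, and Theorem~\ref{th1.7}(2) then delivers the DCCC in the regular setting. (One may equally route through Theorem~\ref{th2.1}: selectively $k$-star-ccc $\Longrightarrow$ $k$-SL $\Longrightarrow$ $\omega$-SL.)

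For part (1) I would then exhibit, valid in regular spaces, the single cycle weakly SL $\Longrightarrow$ $2$-SL $\Longrightarrow$ selectively $3$-star-ccc $\Longrightarrow$ $3$-SL $\Longrightarrow$ $\omega$-SL $\Longrightarrow$ DCCC $\Longrightarrow$ weakly SL. The arrows come, in order, from the elementary implication weakly $k$-SL $\Longrightarrow$ $(k+1)$-SL; Theorem~\ref{th2.2} with $k=2$; Theorem~\ref{th2.1} with $k=3$; the passage $k$-SL $\Longrightarrow$ $\omega$-SL; Theorem~\ref{th1.7}(2); and Theorem~\ref{th1.7}(1). Every property named in the statement, together with the intermediate $3$-SL and $\omega$-SL, sits on this cycle, so they collapse to a single class. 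Part (2) repeats the idea on the SSL side in the normal setting, closing the cycle weakly SSL $\Longrightarrow$ $2$-SSL $\Longrightarrow$ $2$-SL $\Longrightarrow$ selectively $3$-star-ccc $\Longrightarrow$ $3$-SL $\Longrightarrow$ $\omega$-SL $\Longrightarrow$ DCCC $\Longrightarrow$ weakly SSL, where the new first two arrows are the elementary weakly $k$-SSL $\Longrightarrow$ $(k+1)$-SSL and $k$-SSL $\Longrightarrow$ $k$-SL, and the last two arrows are the two halves of Theorem~\ref{th1.7}(3).

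There is no deep obstacle here; the only point requiring care is the index bookkeeping that pins selectively $3$-star-ccc between $2$-SL and $3$-SL via Theorems~\ref{th2.2} and~\ref{th2.1}, after which both bounds collapse to $\omega$-SL $=$ DCCC by Theorem~\ref{th1.7}. The value $3$ is genuinely forced: for $k=2$, Theorem~\ref{th2.2} would only yield SL $\Longrightarrow$ selectively $2$-star-ccc, and SL is in general strictly stronger than the DCCC, so the cycle could not close with these tools — a gap made concrete by Example~\ref{ex3.4}, which is selectively $3$-star-ccc but neither selectively $2$-star-ccc nor weakly SSL.
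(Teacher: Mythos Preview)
Your proposal is correct and follows essentially the same route as the paper, which dispatches the corollary in one line by invoking Theorem~\ref{th2.5} and Theorem~\ref{th1.7}; you have simply unpacked the implicit cycle explicitly, additionally naming Theorems~\ref{th2.1} and~\ref{th2.2} to pin selectively $3$-star-ccc between $2$-SL and $3$-SL.
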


The above Corollary could answer Question~4.10 in \cite{xuan-2}. Moreover, in the next section, we will present a Hausdorff space that is selectively $3$-star-ccc (in fact is $2$-SL) but does not have the DCCC (see Example~\ref{ex3.4}). Therefore, we can also partially answer Question~4.1 in \cite{xuan-2}. However, the author does not know the answer to the following questions:

\begin{question}\label{co2.7}
Does there exist a selectively $2$-star-ccc Hausdorff space that does not have the DCCC.
\end{question}

\begin{question}[\cite{xuan-2}]\label{co2.8}
Is there a weakly SSL normal space which is not selectively $2$-star-ccc?
\end{question}

\begin{question}[\cite{xuan-2}]\label{co2.9}
Is there a selectively $3$-star-ccc normal space which is not selectively $2$-star-ccc?
\end{question}

\begin{remark}
Note that Question~2.8 and~2.9 are essentially equivalent by Corollary 2.6. Finding  counterexamples may be difficult: since both selectively $3$-star-ccc and weakly SSL are equivalent to $2$-SSL in normal spaces, while it is not even known whether there is a normal $2$-SSL space
which is not SL (the question was first raised in \cite{van}).
\end{remark}

\section{Some examples}
In this section we will give some examples to make distinctions.
	
\begin{example}\label{ex3.1}
The ordinal space $[0,\omega_1)$ has the following properties:

\begin{enumerate}
\item It is selectively star-ccc but not selectively ccc;
\item It is a DCCC space but not a CCC space;
\item It is SSL (since it is countably compact) but not weakly Lindelöf, and hence not Lindelöf.
\end{enumerate}
\end{example}
	
	\begin{proof}
		By using a concept named \emph{absolutely compact} due to Matveev (see \cite{mat} for more details), Song and Xuan in \cite{song-2} showed that $[0,\omega_1)$ being a absolutely compact space is selectively star-ccc. Because $[0,\omega_1)$ is countably compact, it is easy to see that $[0,\omega_1)$ is both DCCC and SSL. The remaining implications are straightforward.
	\end{proof}

 \begin{example}\label{ex3.2}
	The Isbell-Mrówka space $\Psi(\omega)$ is selectively star-ccc but is neither Lindelöf nor countably compact.
\end{example}
	
	\begin{proof}
We first give the definition of $\Psi(\omega)$. Recall that a family $\mathcal{A}$ of infinite subsets of a set $X$ is called \emph{almost disjoint} if
the intersection of any two distinct sets in $\mathcal{A}$ is finite. Let $\{N_s:s\in S\}$ be a maximal almost disjoint family of $\omega$. Let $\Psi(\omega)=\omega\cup S$ and topologize $\Psi(\omega)$ as follows: points of $\omega$ are isolated, basic neighborhoods of points $s\in S$ take the form $\{s\} \cup\left(N_{s}-F\right)$ where $F\subset \omega$ is finite. It is straightforward to verify that $\Psi(\omega)$ is Hausdorff, first countable, separable and locally compact. However, because $S$ is an uncountable closed discrete subset, thus $\Psi(\omega)$ is neither Lindelöf nor countably compact. This is a classic example due to Mrówka \cite{mro} and Isbell. Because $\omega$ is a countable dense subset consisting of isolated points, thus $\Psi(\omega)$ is selectively ccc and hence selectively star-ccc. Moreover, since $S$ is an uncountable closed discrete subset, $\Psi(\omega)$ is neither Lindelöf nor countably compact.
	\end{proof}
	
 \begin{example}\label{ex3.3}
There exists a Hausdorff space that is SSL but not selectively star-ccc.
\end{example}	
 \begin{proof}
Let $X$ be a set constructed from the ordinal space $[0,\omega_1]$ by placing between each ordinal $\alpha$ and its successor $\alpha+1$ a copy of the unit interval
$I= (0, 1)$. We denote by $(X, \tau)$ the space $X$ equipped with the order topology $\tau$. Note that $(X, \tau)$ is Hausdorff and compact.

For each $n\in\mathbb{N}^+$, we express $(X,\tau)$ as the union of pairwise disjoint dense subsets $A_1, A_2,\ldots, A_{2n+1}$ such that both $0$ and all limit ordinals are in $A_{2n+1}$. Let $E_{2i+1}=A_{2i}\cup A_{2i+1}\cup A_{2i+2}$ for $i=0,1,\ldots,n$ and $E_{2i}=A_{2i}$ for $i=1,\ldots,n$, where $A_0= A_1$ and $A_{2n+2}=A_{2n+1}$. Thus we can define a new topology $\tau_n$ on $X$ as follows: a basic open set takes the form $I_x\cap E_{n(x)}$, where $I_x$ containing $x$ is some interval in the order topology $\tau$ and $n(x)$ is the unique integer with $x\in A_{n(x)}$. Clearly, all $(X, \tau_n)$ are Hausdorff. Such a construction was first described by Sarkhel in \cite{sar}. Later, van Douwen et al., in \cite{van} proved that $(X, \tau_n)$ is $n$-SL but not $n$-SSL. Indeed, they showed that there exists an uncountable pairwise disjoint family $\{(x_{\alpha},y_{\alpha}): \alpha<\omega_1\}$ of nonempty intervals of $X$ satisfying:

\begin{enumerate}
\item $x_0=0$ and $x_{\alpha}\in A_{2n+1}$ for each $\alpha<\omega$;

\item $y_{\alpha}\in A_{2n+1}$ such that $x_{\alpha}<y_{\alpha}\leqslant x_{\alpha+1}$, where $y_{\alpha}$ is not in the $(0, 1)$ segment containing or immediately following $x_{\alpha}$;

\item $\bigcup_{i=1}^{2 n} A_{i}\subset \bigcup_{\alpha<\omega_1}(x_{\alpha},y_{\alpha})$.
\end{enumerate}

 Moreover, they use a similar method (with $X =A_1\cup\cdots\cup A_{2n}$) to get a topology (denoted by $\tau_{n}^{\prime}$) such that $(X,\tau_{n}^{\prime})$ is $n$-SSL but not $n-1$-SL. The above proofs can be found in \cite[Theorem 2.1.5]{van}. We now prove that $(X,\tau'_1)$ is not selectively star-ccc.
 
Let $\mathcal{A}=\{(x_{\alpha}, y_{\alpha}):\alpha<\omega_1\}$ be the uncountable pairwise disjoint open family stated above and let $\mathcal{A}_n=\mathcal{A}$ for each $n\in\omega$. It is easy to see that $(\mathcal{A}_n:n\in\omega)$ is a sequence of maximal pairwise disjoint open families. Let $\mathcal{U}=\{E_2\}\cup\{(x_{\alpha}, y_{\alpha}): \alpha<\omega_1\}$, then $\mathcal{U}$ is an open cover of $(X,\tau_1')$. For any sequence $((x_n,y_n): n\in\omega)$ with $(x_n,y_n)\in\mathcal{A}_n$. As $\mathcal{A}$ is uncountable, there exists some $\alpha$ such that $(x_{\alpha}, y_{\alpha})\cap\bigcup_{n\in\omega}(x_n,y_n)=\emptyset$. Hence there exists a point $a_{\alpha}\in A_1\cap(x_{\alpha}, y_{\alpha})$ such that $\operatorname{st}(a_{\alpha}, \mathcal{U})\cap\bigcup_{n\in\omega}(x_n,y_n)=\emptyset$. Therefore, $\operatorname{st}(\bigcup_{n\in\omega}(x_n,y_n), \mathcal{U})\neq X$ and hence $(X, \tau_1')$ is not selectively star-ccc.
	\end{proof}
 
Next, we will use the same technique, but for the ordinal space $[0,\omega_1)$, to construct some counterexamples.  
 
  \begin{example}\label{ex3.4}
There exists a Hausdorff and first-countable space that has the SL property but is not selectively star-ccc. Moreover, there exists a Hausdorff and first-countable space that is $2$-SL but not selectively $2$-star-ccc and does not have the DCCC.
\end{example}

\begin{proof}
Let $Y$ be the ordinal space $[0,\omega_1)$ equipped with the topology $\tau$ or $\tau_n$ constructed in Example \ref{ex3.3}. 

\medskip
{\bf Claim~1.} ~\emph{Both $(Y,\tau)$ and $(Y, \tau_n)$ are first-countable}. For $(Y,\tau)$, if point $x\in Y$ either belongs to some interval $(0,1)$ or is a non-limit ordinal, then countable family $\{(x-\frac{1}{n}, x+\frac{1}{n}):n\in\mathbb{N}^+\}$ is the neighbourhood base of $x$. On the other hand, the countable family $\{[\alpha, x+\frac{1}{n}):\alpha\in[0,x), n\in\mathbb{N}^+\}$ is the neighbourhood base for other $x\in Y$. Similarly, it is easy to see that $\{(x-\frac{1}{i}, x+\frac{1}{i})\cap E_{n(x)}:i\in\mathbb{N}^+\}$ or $\{[\alpha, x+\frac{1}{i})\cap E_{n(x)}:\alpha\in[0,x), i\in\mathbb{N}^+\}$ is a countable neighbourhood base of $x\in (Y, \tau_n)$.

\medskip
{\bf Claim 2.}~\emph{$(Y, \tau_1)$ is a SL space (hence selectively $2$-star-ccc) but not selectively star-ccc}. Let $\mathcal{U}=\{I_x\cap E_{n(x)}: x\in Y\}$ be any basic open cover of $(Y, \tau_1)$. Because $(Y, \tau)$ is SL and $\mathcal{I}=\{I_x:x\in  Y\}$ is an open cover of $(Y, \tau)$, thus there exists a countable subfamily $\{I_{x_i}: i\in\omega\}\subset\mathcal{I}$ such that $\operatorname{st}(\bigcup_{i\in\omega}I_{x_i}, \mathcal{I})=Y$; let $\mathcal{U}'=\{I_{x_i}\cap E_{n(x_i)}: i\in\omega\}$ be the corresponding countable subfamily of $\mathcal{U}$. For any $x\in Y$, since $\operatorname{st}(\bigcup_{i\in\omega}I_{x_i}, \mathcal{I})=Y$, there exists some integer $i$ such that $I_x\cap I_{x_i}\neq\emptyset$. Moreover, because the set $E_{n(x)}\cap E_{n(x_i)}$ contains at least the dense subset $A_2$ of $(Y,\tau)$, thus $\big(I_{x_i}\cap E_{n(x_i)}\big)\cap \big(I_{x}\cap E_{n(x)}\big)\neq\emptyset$, and hence $\operatorname{st}(\bigcup\mathcal{U}', \mathcal{U})=Y$, i.e., $(Y,\tau_1)$ is SL.

Let $\mathcal{A}=\{(x_{\alpha}, y_{\alpha}):\alpha<\omega_1\}$ be the uncountable pairwise disjoint open family stated in Example \ref{ex3.3} and let $\mathcal{A}_n=\mathcal{A}$ for each $n\in\omega$. Let $\mathcal{V}=\{E_3\}\cup\{(x_{\alpha}, y_{\alpha}): \alpha<\omega_1\}$, then $\mathcal{V}$ is an open cover of $(Y,\tau_1)$. For any sequence $((x_n,y_n)\in\mathcal{A}_n: n\in\omega)$, it is easy to see that there exists some $\alpha$ such that $(x_{\alpha}, y_{\alpha})\cap\bigcup_{n\in\omega} (x_n,y_n)=\emptyset$. Hence there is a point $a_{\alpha}\in A_1\cap(x_{\alpha}, y_{\alpha})$ such that $\operatorname{st}(a_{\alpha}, \mathcal{V})\cap\bigcup_{n\in\omega}(x_n,y_n)=\emptyset$. Therefore, $\operatorname{st}(\bigcup_{n\in\omega}(x_n,y_n), \mathcal{V})\neq Y$ and hence $(Y, \tau_1)$ is not selectively star-ccc.

\medskip
{\bf Claim 3.}~\emph{$(Y, \tau_2)$ is $2$-SL (hence selectively $3$-star-ccc) but is neither selectively $2$-star-ccc nor weakly SSL}. Let $\mathcal{U}=\{I_x\cap E_{n(x)}: x\in Y\}$ be any basic open cover of $(Y, \tau_2)$. Because $(Y, \tau)$ is SL and $\mathcal{I}=\{I_x:x\in  Y\}$ is an open cover of $(Y, \tau)$, then there exists an countable subfamily $\{I_{x_i}: i\in\omega\}\subset\mathcal{I}$ such that $\operatorname{st}(\bigcup_{i\in\omega}I_{x_i}, \mathcal{I})=Y$; let $\mathcal{U}'=\{I_{x_i}\cap E_{n(x_i)}: i\in\omega\}$ be the corresponding countable subfamily of $\mathcal{U}$. For any $x\in Y$, since $\operatorname{st}(\bigcup_{i\in\omega}I_{x_i}, \mathcal{I})=Y$, there exists some $x_i$ such that $I_x\cap I_{x_i}\neq\emptyset$. Without loss of generality, we assume that $x_i\in A_1$ and $x\in A_5$. Because $I_x\cap I_{x_i}\neq\emptyset$ and $A_3$ is dense in $(Y,\tau)$, so we can pick a point $y\in I_x\cap I_{x_i}\cap A_3$. Moreover, since $E_5\cap E_3$ and  $E_3\cap E_1$ contain respectively the dense subsets $A_4$ and $A_2$, it follows that $\big(I_{x}\cap E_{5}\big)\cap \big(I_{y}\cap E_3\big)\neq\emptyset$ and $\big(I_{y}\cap E_{3}\big)\cap \big(I_{x_i}\cap E_1\big)\neq\emptyset$. Thus $x\in \operatorname{st}^2(\bigcup\mathcal{U}', \mathcal{U})$ and hence $\operatorname{st}^2(\bigcup\mathcal{U}', \mathcal{U})=Y$. Therefore, the space $(Y,\tau_2)$ is $2$-SL.

Let $\mathcal{A}=\{(x_{\alpha}, y_{\alpha}):\alpha<\omega_1\}$ be the uncountable pairwise disjoint open family let $\mathcal{A}_n=\mathcal{A}$ for each $n\in\omega$. Let $\mathcal{V}=\{E_5\}\cup\{(x_{\alpha}, y_{\alpha})\cap E_i: \alpha<\omega_1, i=1,2,3,4\}$, then $\mathcal{V}$ is an open cover of $(Y,\tau_2)$. Let $((x_n,y_n)\in\mathcal{A}_n:n\in\omega)$ be any sequence, then there exists $\alpha$ such that $a_{\alpha}\in (x_{\alpha}, y_{\alpha})\cap A_1$ and $(x_{\alpha}, y_{\alpha})\cap\bigcup_{n\in\omega}(x_n,y_n)=\emptyset$. But $\left(x_{\alpha}, y_{\alpha}\right) \cap E_{1}$ is the only member
of $\mathcal{V}$ containing $a_{\alpha}$, thus $\mathrm{st}\left(a_{\alpha}, \mathcal{V}\right) =\left(x_{\alpha}, y_{\alpha}\right) \cap E_{1}$ and hence $\mathrm{st}^2\left(a_{\alpha}, \mathcal{V}\right) \subset\left(x_{\alpha}, y_{\alpha}\right) \cap\left(E_{1} \cup E_{2}\cup E_3\right) \subset\left(x_{\alpha}, y_{\alpha}\right)$. Therefore, there exist an open cover $\mathcal{V}$ and a sequence $\left(\mathcal{A}_n: n\in \omega\right)$ of maximal pairwise disjoint open families such that for any sequence $((x_n,y_n)\in\mathcal{A}_n:n\in\omega)$, we have $\mathrm{st}^2(\bigcup_{n\in\omega}(x_n,y_n),\mathcal{V})\neq Y$. Thus $(Y,\tau_2)$ is not selectively $2$-star-ccc.

Finally, we prove that $(Y, \tau_2)$ is not weakly SSL. Consider the open cover $\mathcal{V}=\{E_5\}\cup\left\{(x_{\alpha}, y_{\alpha})\cap E_i: \alpha<\omega_1, i=1,2,3,4\right\}$. Let $B$ be any countable subset of $Y$, then there exists some $\alpha<\omega_1$ such that $\mathrm{st}(B,\mathcal{V})\subset E_5\cup\bigcup_{\beta\leqslant\alpha}\left(x_{\beta}, y_{\beta}\right)$. Pick any $a_{\alpha+1}\in(x_{\alpha+1}, y_{\alpha+1})\cap A_1$. Clearly, $(x_{\alpha+1}, y_{\alpha+1})\cap E_1$ is an open subset containing $a_{\alpha+1}$ but cannot intersect $E_5\cup\bigcup_{\beta\leqslant\alpha}\left(x_{\alpha}, y_{\alpha}\right)$. Thus $\overline{\mathrm{st}(B,\mathcal{V})}\neq Y$ and hence $(X,\tau_2)$ is not weakly SSL. Note that $\{(x_{\alpha}, y_{\alpha})\cap E_1: \alpha<\omega_1\}$ is an uncountable discrete open family of $(Y,\tau_2)$, so it also does not have the DCCC.
	\end{proof}
 
 The above discussions show that the space $(Y,\tau_2)$ is first-countable and selectively $3$-star-ccc but is neither weakly SSL nor selectively $2$-star-ccc, which give a negative answer to Question 4.9 and 4.11 in \cite{xuan-2}. 

Song and Xuan presented an example \cite[Example 3.11]{song-2} to show that the product of a selectively star-ccc space with a Lindelöf space need not be selectively star-ccc. Moreover, in \cite[Example 3.16]{xuan-2}, they even present a space showing that the product of two Tychonoff Lindelöf spaces need not be selectively star-ccc. The following example shows that the product of a selectively star-ccc space with a compact space also need not be selectively star-ccc. Note that such an example was first described in \cite{van} to show that the product of a SSL space and a compact space need not be SSL. We give the proof for the convenience of the reader.

 \begin{example}\label{ex3.5}
There exists a space that is not selectively star-ccc, despite being the product of a selectively star-ccc space and a compact space.
\end{example}
 
 \begin{proof}
 Let $X=\Psi(\omega)$ be the Isbell-Mrówka space stated in Example \ref{ex3.2}; index $S$ as $\left\{s_{\alpha}:\alpha<\kappa\right\}$. As $\omega$ is a countable dense subset consisting of isolated points, thus $X$ is selectively star-ccc. Let $\left\{y_{\alpha}:\alpha<\kappa\right\}$ be a discrete space with cardinality $\kappa$ and let $Y=\left\{y_{\alpha}:\alpha<\kappa\right\}\cup \left\{\infty \right\}$ be the one-point compactification. 

We now show that $X\times Y$ is not selectively star-ccc. The family $\mathcal{U}=\left\{X\times\left\{y_{\alpha}\right\}:\alpha<\kappa\right\}\cup\left\{N_{s_{\alpha}}\times Y-\{y_{\alpha}\}:\alpha<\kappa\right\}\cup\left\{\{n\}\times Y: n\in \omega\right\}$ is an open cover of $X\times Y$ such that if point $(s_{\alpha},y_{\alpha})\in U\in\mathcal{U}$, then $U$ is of the form $X\times \{y_{\alpha}\}$. Because the subset $\omega\times \left\{y_{\alpha}:\alpha<\kappa\right\}$ is dense and each point $(n,y_{\alpha})$ in it is isolated, so $\mathcal{A}=\left\{\{(n,y_{\alpha})\}:n\in\omega, \alpha<\kappa\right\}$  is a maximal pairwise disjoint open family. Let $\mathcal{A}_n=\mathcal{A}$ for each $n\in\omega$. If $(A_n:n\in\omega)$ is a sequence with $A_n\in\mathcal{A}_n$, then there exists $\alpha$ such that $X\times\{y_{\alpha}\}\cap\bigcup_{n\in\omega}A_n=\emptyset$. Therefore $(s_{\alpha},y_{\alpha})\notin\operatorname{st}\left(\bigcup_{n\in\omega}A_n,\mathcal{U}\right)$ and hence $X\times Y$ is not selectively star-ccc.
 \end{proof}

\end{document}